\theoremstyle{plain}
\newtheorem{acknowledgement}{Acknowledgement}
\newtheorem{corollary}{Corollary}
\newtheorem{proposition}{Proposition}
\newtheorem{theorem}{Theorem}
\numberwithin{equation}{section}
\begin{document}
\title[A new kind of helicoidal surface]{A new kind of helicoidal surface of
value $m$}
\author{Erhan G\"{u}ler}
\date{}
\subjclass[2000]{Primary 53A10; Secondary 53C45}
\keywords{Helicoidal surface of value $m$, rotational surface of value $m$,
mean curvature, Gaussian curvature, Gauss map}
\dedicatory{In a memory of Professor Franki Dillen (1963-2013)}
\thanks{}

\begin{abstract}
We define a new kind of helicoidal surface of value $m.$ A rotational
surface which is isometric to the helicoidal surface of value $m$ is revealed%
$.$ In addition, we calculate some differential geometric properties of the
helicoidal surface of value $3$ in three dimensional Euclidean space.
\end{abstract}

\maketitle

\section{\textbf{Introduction}}

In classical surface geometry, the right helicoid (resp. catenoid) is the
only ruled (resp. rotational) surface which is minimal in Euclidean space.
If we focus on the ruled (helicoid) and rotational characters, we have
Bour's theorem in \cite{Bo}. The French Mathematician Edmond Bour used
semi-geodesic coordinates and found a number of new cases of the deformation
of surfaces in 1862. He also gave a well known theorem\textit{\ }about the
helicoidal and rotational surfaces.

Kenmotsu, \cite{Ke}\ focuses on the surfaces of revolution with prescribed
mean curvature. About helicoidal surfaces in Euclidean 3-space, do Carmo and
Dajczer \cite{Ca} prove that, by using a result of Bour \cite{Bo}, there
exists a two-parameter family of helicoidal surfaces isometric to a given
helicoidal surface. By making use of this parametrization, they found a
representation formula for helicoidal surfaces with constant mean curvature.
Furthermore they prove that the associated family of Delaunay surfaces is
made up by helicoidal surfaces of constant mean curvature. Hitt and Roussos 
\cite{Hi} also study on the helicoidal surfaces with constant mean curvature
using computer graphics. Baikoussis and Koufogiorgos \cite{Ba} prove that
the helicoidal surfaces satisfying $K_{II}$ $=H$ are locally characterized
by the constancy of the ratio of the principal curvatures. Ikawa determines
pairs of surfaces by Bour's theorem with the additional condition that they
have the same Gauss map in Euclidean 3-space in \cite{I}. Some relations
among the Laplace-Beltrami operator and curvatures of the helicoidal
surfaces in Euclidean 3-space are shown by G\"{u}ler et al in \cite{Gu2}.
They give Bour's theorem on the Gauss map, and some special examples.

On the other hand, Dillen and Sodsiri \cite{Di} study ruled linear
Weingarten surfaces in Minkowski 3-space. See also Minkowskian cases in (%
\cite{Be, Be2, Gu, Gu2, I2}).

In section 2, we recall some basic notions of the Euclidean geometry, and
the reader can be found the definitions of helicoidal and rotational
surfaces of value $m$. Isometric helicoidal and rotational surfaces of value 
$m$ are obtained by Bour's theorem\ in section 3. Finally, isometric
helicoidal and rotational surfaces of value $3$ are examined in the last
section.

\section{\textbf{Preliminaries}}

We shall identify a vector (a,b,c) with its transpose$.$ In this section, we
will obtain the rotational and helicoidal surfaces in Euclidean 3-space. The
reader can be found basic elements of differential geometry in \cite{Bo, Ei,
Sp, St}.

Now we define the rotational surface and helicoidal surface in $\mathbb{E}%
^{3}$. For an open interval $I\subset $ $\mathbb{R}$, let $\gamma :I\overset{%
}{\longrightarrow }\Pi $ be a curve in a plane $\Pi $ in $\mathbb{E}^{3}$,
and let $\ell $ be a straight line in $\Pi $. A rotational surface in $%
\mathbb{E}^{3}$ is defined as a surface rotating a curve $\gamma $ around a
line $\ell $ (these are called the \textit{profile curve} and the \textit{%
axis}, respectively). Suppose that when a profile curve $\gamma $ rotates
around the axis $\ell $, it simultaneously displaces parallel lines
orthogonal to the axis $\ell $, so that the speed of displacement is
proportional to the speed of rotation. Then the resulting surface is called
the \textit{helicoidal surface} with axis $\ell $ and pitch $a\in \mathbb{R}%
^{+}$.

We may suppose that $\ell $ is the line spanned by the vector $(0,0,1)$. The
orthogonal matrix which fixes the above vector is%
\begin{equation*}
A(\theta )=\left( 
\begin{array}{ccc}
\cos \theta & -\sin \theta & 0 \\ 
\sin \theta & \cos \theta & 0 \\ 
0 & 0 & 1%
\end{array}%
\right) ,\text{ \ }\theta \in {\mathbb{R}}.
\end{equation*}%
The matrix $A$ can be found by solving the following equations
simultaneously; $A\ell =\ell ,$ $A^{t}A=AA^{t}=I_{3},$ $\det A=1.$ When the
axis of rotation is $\ell $, there is an Euclidean transformation by which
the axis is $\ell $ transformed to the $z$-axis of $\mathbb{E}^{3}$.
Parametrization of the profile curve is given by $\gamma (r)=(r,0,\varphi
\left( r\right) ),$ where $\varphi \left( r\right) :I\subset {\mathbb{R}}%
\longrightarrow {\mathbb{R}}$ are differentiable function for all $r\in I$.
A helicoidal surface in three dimensional Euclidean space which is spanned
by the vector $(0,0,1)$ with pitch $a$, as follow%
\begin{equation*}
\mathbf{H}(r,\theta )=A(\theta ).\gamma (r)+a\theta \ell .
\end{equation*}%
When $a=0$, helicoidal surface is just a rotational surface.

\section{Helicoidal surfaces of value $m$}

We define a new kind of helicoidal surface, and using Bour's theorem we
reveal a kind of isometric rotational surface in this section.

A \textit{helicoidal surface of value} $m$ is defined by%
\begin{equation}
\mathbf{H}_{m}\left( r,\theta \right) =\mathbf{H}_{m}^{1}\left( r,\theta
\right) +\mathbf{H}_{m}^{2}\left( r,\theta \right) ,  \tag{3.1}
\end{equation}%
where $\mathbf{H}_{m}^{1}\left( r,\theta \right) =\mathbf{\Re }%
_{m}^{1}.\gamma _{m}^{1}+\frac{1}{2}a\theta \ell ,$ $\mathbf{H}%
_{m}^{2}\left( r,\theta \right) =\mathbf{\Re }_{m}^{2}.\gamma _{m}^{2}+\frac{%
1}{2}a\theta \ell ,$ rotating matrices $\mathbf{\Re }_{m}^{1}$ and $\mathbf{%
\Re }_{m}^{2}$ are%
\begin{equation*}
\mathbf{\Re }_{m}^{1}\left( \theta \right) =\left( 
\begin{array}{ccc}
\cos \left[ \left( m-1\right) \theta \right] & \sin \left[ \left( m-1\right)
\theta \right] & 0 \\ 
-\sin \left[ \left( m-1\right) \theta \right] & \cos \left[ \left(
m-1\right) \theta \right] & 0 \\ 
0 & 0 & 1%
\end{array}%
\right)
\end{equation*}%
and%
\begin{equation*}
\mathbf{\Re }_{m}^{2}\left( \theta \right) =\left( 
\begin{array}{ccc}
\cos \left[ \left( m+1\right) \theta \right] & -\sin \left[ \left(
m+1\right) \theta \right] & 0 \\ 
\sin \left[ \left( m+1\right) \theta \right] & \cos \left[ \left( m+1\right)
\theta \right] & 0 \\ 
0 & 0 & 1%
\end{array}%
\right) .
\end{equation*}%
$\ell =\left( 0,0,1\right) $ is the rotating axis, and the profile curves are%
\begin{equation*}
\gamma _{m}^{1}(r)=\left( \frac{r^{m-1}}{m-1},0,\frac{1}{2}\varphi \left(
r\right) \right) ,\text{ \ }\gamma _{m}^{2}(r)=\left( -\frac{r^{m+1}}{m+1},0,%
\frac{1}{2}\varphi \left( r\right) \right) ,
\end{equation*}%
$m\in \mathbb{R}-\left\{ 1\right\} $ in $\gamma _{m}^{1},$ $m\in \mathbb{R}%
-\left\{ -1\right\} $ in $\gamma _{m}^{2},$ $r\in \mathbb{R}^{+},$ $0\leq
\theta \leq 2\pi $, and the pitch $a\in \mathbb{R}^{+}.$ Since the
helicoidal surface is given by rotating the profile curves $\gamma $ around
the axis $\ell $ and simultaneously displacing parallel lines orthogonal to
the axis $\ell $, so that the speed of displacement is proportional to the
speed of rotation. So, we have the following representation of the
helicoidal surface of value $m$ in the following theorem.

\begin{theorem}
A helicoidal surface of value $m$ (in $\left( \text{3.1}\right) $ is reduces
to) 
\begin{equation}
\mathbf{H}_{m}\left( r,\theta \right) =\left( 
\begin{array}{c}
\frac{r^{m-1}}{m-1}\cos \left[ \left( m-1\right) \theta \right] -\frac{%
r^{m+1}}{m+1}\cos \left[ \left( m+1\right) \theta \right] \\ 
-\frac{r^{m-1}}{m-1}\sin \left[ \left( m-1\right) \theta \right] -\frac{%
r^{m+1}}{m+1}\sin \left[ \left( m+1\right) \theta \right] \\ 
\varphi \left( r\right) +a\theta%
\end{array}%
\right) ,  \tag{3.2}
\end{equation}%
is isometric to the rotational surface of value $m$%
\begin{equation}
\mathbf{R}_{m}(r_{\mathbf{R}},\theta _{\mathbf{R}})=\left( 
\begin{array}{c}
\frac{r_{\mathbf{R}}^{m-1}}{m-1}\cos \left[ \left( m-1\right) \theta _{%
\mathbf{R}}\right] -\frac{r_{\mathbf{R}}^{m+1}}{m+1}\cos \left[ \left(
m+1\right) \theta _{\mathbf{R}}\right] \\ 
-\frac{r_{\mathbf{R}}^{m-1}}{m-1}\sin \left[ \left( m-1\right) \theta _{%
\mathbf{R}}\right] -\frac{r_{\mathbf{R}}^{m+1}}{m+1}\sin \left[ \left(
m+1\right) \theta _{\mathbf{R}}\right] \\ 
\varphi _{\mathbf{R}}\left( r_{\mathbf{R}}\right)%
\end{array}%
\right)  \tag{3.3}
\end{equation}%
by Bour's theorem, where%
\begin{eqnarray*}
\varphi _{\mathbf{R}}^{\prime 2} &=&\frac{\left[ 2\left( m+1\right) r_{%
\mathbf{R}}^{2m+1}+4mr_{\mathbf{R}}^{2m-1}\cos \left( 2m\theta _{\mathbf{R}%
}\right) \right] ^{2}\det I}{\left[ 2\left( m+1\right)
r^{2m+1}+4mr^{2m-1}\cos \left( 2m\theta \right) \right] ^{2}G} \\
&&+\frac{2r_{\mathbf{R}}^{2m}\sin ^{2}\left( 2m\theta _{\mathbf{R}}\right) }{%
r_{\mathbf{R}}^{4}+2r_{\mathbf{R}}^{2}\cos \left( 2m\theta _{\mathbf{R}%
}\right) +1} \\
&&-r_{\mathbf{R}}^{2m-4}\left( r_{\mathbf{R}}^{4}-2r_{\mathbf{R}}^{2}\cos
\left( 2m\theta _{\mathbf{R}}\right) +1\right) \text{,}
\end{eqnarray*}%
\begin{eqnarray*}
r_{\mathbf{R}} &=&\sqrt{G}, \\
\theta _{\mathbf{R}} &=&\theta +\int \frac{F}{G}dr, \\
E &=&r^{2m-4}(r^{4}-2r^{2}\cos \left( 2m\theta \right) +1)+\varphi ^{\prime
2}, \\
F &=&2r^{2m-1}\sin \left( 2m\theta \right) +a\varphi ^{\prime }, \\
G &=&r^{2m-2}(r^{4}+2r^{2}\cos \left( 2m\theta \right) +1)+a^{2},
\end{eqnarray*}%
$m\in \mathbb{R}-\left\{ -1,1\right\} ,$ $r\in \mathbb{R}^{+},$ $\theta \in
I\subset \mathbb{R},$ and the pitch $a\in \mathbb{R}^{+}.$
\end{theorem}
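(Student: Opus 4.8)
\emph{Proof plan.} The plan is to follow the classical derivation of Bour's theorem used by do Carmo and Dajczer \cite{Ca}: compute the first fundamental form of $\mathbf{H}_{m}$, bring its metric into ``rotational'' shape by an explicit change of parameters, and then read off the profile function $\varphi_{\mathbf{R}}$ of the isometric rotational surface by comparing with the first fundamental form of $\mathbf{R}_{m}$.

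First I would differentiate (3.2). With the shorthand $c_{\pm}=\cos[(m\pm 1)\theta]$, $s_{\pm}=\sin[(m\pm 1)\theta]$ one obtains
\[
\mathbf{H}_{m,r}=\bigl(r^{m-2}c_{-}-r^{m}c_{+},\ -r^{m-2}s_{-}-r^{m}s_{+},\ \varphi'\bigr)
\]
together with the analogous expression for $\mathbf{H}_{m,\theta}$. Forming $E=\langle\mathbf{H}_{m,r},\mathbf{H}_{m,r}\rangle$, $F=\langle\mathbf{H}_{m,r},\mathbf{H}_{m,\theta}\rangle$, $G=\langle\mathbf{H}_{m,\theta},\mathbf{H}_{m,\theta}\rangle$ and collecting terms, the only simplification needed is that every product of the two frequencies collapses through $c_{-}c_{+}-s_{-}s_{+}=\cos(2m\theta)$ and $s_{-}c_{+}+c_{-}s_{+}=\sin(2m\theta)$ --- this is precisely why the combination $2m\theta$, rather than $2\theta$, appears --- and one arrives at the stated $E,F,G$. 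The same computation applied to (3.3), with $a=0$ and $r,\theta,\varphi$ replaced by $r_{\mathbf{R}},\theta_{\mathbf{R}},\varphi_{\mathbf{R}}$, gives $E_{\mathbf{R}}=r_{\mathbf{R}}^{2m-4}(r_{\mathbf{R}}^{4}-2r_{\mathbf{R}}^{2}\cos(2m\theta_{\mathbf{R}})+1)+\varphi_{\mathbf{R}}^{\prime 2}$, $F_{\mathbf{R}}=2r_{\mathbf{R}}^{2m-1}\sin(2m\theta_{\mathbf{R}})$, and $G_{\mathbf{R}}=r_{\mathbf{R}}^{2m-2}(r_{\mathbf{R}}^{4}+2r_{\mathbf{R}}^{2}\cos(2m\theta_{\mathbf{R}})+1)$.

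Then I would invoke Bour's theorem \cite{Bo,Ca}. Completing the square in the metric of $\mathbf{H}_{m}$ yields $I_{\mathbf{H}_{m}}=(E-F^{2}/G)\,dr^{2}+G\,\bigl(d\theta+\tfrac{F}{G}\,dr\bigr)^{2}$, which is what singles out the new parameters $\theta_{\mathbf{R}}=\theta+\int(F/G)\,dr$ and $r_{\mathbf{R}}=\sqrt{G}$, exactly as in the classical case. Substituting these into the first fundamental form of $\mathbf{R}_{m}$ and demanding that it coincide with $I_{\mathbf{H}_{m}}$ then determines $\varphi_{\mathbf{R}}$: the $d\theta_{\mathbf{R}}^{2}$- and $dr_{\mathbf{R}}\,d\theta_{\mathbf{R}}$-entries are absorbed by the definitions of $r_{\mathbf{R}}$ and $\theta_{\mathbf{R}}$, while the $dr_{\mathbf{R}}^{2}$-entry forces an identity of the shape $E_{\mathbf{R}}-F_{\mathbf{R}}^{2}/G_{\mathbf{R}}=(\partial r/\partial r_{\mathbf{R}})^{2}\,(E-F^{2}/G)$. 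Solving it for $\varphi_{\mathbf{R}}^{\prime 2}=E_{\mathbf{R}}-r_{\mathbf{R}}^{2m-4}(r_{\mathbf{R}}^{4}-2r_{\mathbf{R}}^{2}\cos(2m\theta_{\mathbf{R}})+1)$ and using $E-F^{2}/G=\det I/G$ assembles the displayed expression, the rational prefactor being the square of the Jacobian factor $\partial r/\partial r_{\mathbf{R}}$; here one recognizes $2(m+1)r^{2m+1}+4mr^{2m-1}\cos(2m\theta)=\partial_{r}\bigl(r^{2m+2}+2r^{2m}\cos(2m\theta)\bigr)$, which accounts for the polynomials that occur, and the remaining terms collect the $F_{\mathbf{R}}$-dependent contribution and the non-$\varphi_{\mathbf{R}}'$ part of $E_{\mathbf{R}}$.

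The main obstacle --- and what makes the computation substantially heavier than in the classical Bour setting --- is that here $E,F,G$ genuinely depend on $\theta$ as well as on $r$ (through $\cos(2m\theta)$ and $\sin(2m\theta)$), whereas for an ordinary helicoid they are functions of the profile parameter alone. Consequently $F/G$ is not an exact differential in $dr$, $\sqrt{G}$ varies in both variables, and the change of coordinates $(r,\theta)\mapsto(r_{\mathbf{R}},\theta_{\mathbf{R}})$ genuinely mixes $dr$ and $d\theta$; carrying all of the resulting cross-terms correctly through the coefficient comparison, and only then isolating the closed form for $\varphi_{\mathbf{R}}^{\prime 2}$ together with $r_{\mathbf{R}}=\sqrt{G}$ and $\theta_{\mathbf{R}}=\theta+\int(F/G)\,dr$, is the bulk of the work. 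The rest is routine differentiation and trigonometric bookkeeping.
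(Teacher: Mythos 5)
Your plan follows essentially the same route as the paper's proof: you compute $E,F,G$ of $\mathbf{H}_m$ and of $\mathbf{R}_m$, pass to the orthogonal-trajectory coordinate $\overline{\theta }=\theta +\int \tfrac{F}{G}\,dr$ (your ``completing the square'' is exactly the paper's substitution $d\theta =d\overline{\theta }-\tfrac{F}{G}\,dr$ giving $ds^{2}=\tfrac{\det I}{G}\,dr^{2}+G\,d\overline{\theta }^{2}$), reduce both metrics to the semi-geodesic form $d\overline{r}^{\,2}+k^{2}\,d\overline{\theta }^{\,2}$ with $k=\sqrt{G}$, and then solve for $\varphi _{\mathbf{R}}^{\prime 2}$ by matching the $dr_{\mathbf{R}}^{2}$-coefficients through the Jacobian factor, which is precisely the paper's equation (3.10) together with its $dr_{\mathbf{R}}$ substitution. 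So the proposal is correct and essentially identical in approach to the paper's argument.
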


\begin{proof}
The line element of the the helicoidal surface $\mathbf{H}_{m}(r,\theta )$ is%
\begin{equation}
\begin{array}{c}
ds^{2}=\left[ r^{2m-4}\left( r^{4}-2r^{2}\cos \left( 2m\theta \right)
+1\right) +\varphi ^{\prime 2}\right] dr^{2} \\ 
+2\left( 2r^{2m-1}\sin \left( 2m\theta \right) +a\varphi ^{\prime }\right)
drd\theta \\ 
\text{ \ \ \ \ \ \ \ \ \ \ \ \ \ }+\left[ r^{2m-2}\left( r^{4}+2r^{2}\cos
\left( 2m\theta \right) +1\right) +a^{2}\right] d\theta ^{2}.%
\end{array}
\tag{3.4}
\end{equation}%
Helices in $\mathbf{H}_{m}(r,\theta )$ are curves defined by $r=const.$. So
curves in $\mathbf{H}_{m}(r,\theta )$ that are orthogonal to helices supply
the orthogonality condition $F$ $dr+G$ $d\theta =0.$ Thus, we obtain $\theta
=-\int \frac{F}{G}$ $dr+c,$ where $c$ is constant. Hence if we put $%
\overline{\theta }=\theta +\int \frac{F}{G}$ $dr,$ then curves orthogonal to
helices are given by $\overline{\theta }=const.$. Substituting the equation $%
d\theta =d\overline{\theta }-\frac{F}{G}dr$ into the line element $\left( 
\text{3.4}\right) $, we have%
\begin{equation}
ds^{2}=\frac{Q}{G}\text{ }dr^{2}+G\text{ }d\overline{\theta }^{2},  \tag{3.5}
\end{equation}%
where $Q:=\det I.$ Setting $\overline{r}:=\dint \sqrt{\frac{Q}{G}}$ $dr,$ $%
k\left( \overline{r}\right) :=\sqrt{G},$ $\left( \text{3.5}\right) $ becomes%
\begin{equation}
ds^{2}=d\overline{r}^{2}+k^{2}\left( \overline{r}\right) d\overline{\theta }%
^{2}.  \tag{3.6}
\end{equation}%
The rotational surface%
\begin{equation}
\mathbf{R}_{m}(r_{\mathbf{R}},\theta _{\mathbf{R}})=\left( 
\begin{array}{c}
\frac{r_{\mathbf{R}}^{m-1}}{m-1}\cos \left[ \left( m-1\right) \theta _{%
\mathbf{R}}\right] -\frac{r_{\mathbf{R}}^{m+1}}{m+1}\cos \left[ \left(
m+1\right) \theta _{\mathbf{R}}\right] \\ 
-\frac{r_{\mathbf{R}}^{m-1}}{m-1}\sin \left[ \left( m-1\right) \theta _{%
\mathbf{R}}\right] -\frac{r_{\mathbf{R}}^{m+1}}{m+1}\sin \left[ \left(
m+1\right) \theta _{\mathbf{R}}\right] \\ 
\varphi _{\mathbf{R}}\left( r_{\mathbf{R}}\right)%
\end{array}%
\right)  \tag{3.7}
\end{equation}%
has the line element%
\begin{equation}
ds_{\mathbf{R}}^{2}=\frac{Q_{\mathbf{R}}}{G_{\mathbf{R}}}\text{ }dr_{\mathbf{%
R}}^{2}+G_{\mathbf{R}}\text{ }d\overline{\theta }_{\mathbf{R}}^{2}, 
\tag{3.8}
\end{equation}%
where%
\begin{eqnarray*}
E_{\mathbf{R}} &=&r_{\mathbf{R}}^{2m-4}\left( r_{\mathbf{R}}^{4}-2r_{\mathbf{%
R}}^{2}\cos \left( 2m\theta _{\mathbf{R}}\right) +1\right) +\varphi _{%
\mathbf{R}}^{\prime 2}, \\
F_{\mathbf{R}} &=&2r_{\mathbf{R}}^{2m-1}\sin \left( 2m\theta _{\mathbf{R}%
}\right) , \\
G_{\mathbf{R}} &=&r_{\mathbf{R}}^{2m-2}(r_{\mathbf{R}}^{4}+2r_{\mathbf{R}%
}^{2}\cos \left( 2m\theta _{\mathbf{R}}\right) +1).
\end{eqnarray*}%
Again, setting $\overline{r}_{\mathbf{R}}:=\dint \sqrt{\frac{Q_{\mathbf{R}}}{%
G_{\mathbf{R}}}\text{ }}dr_{\mathbf{R}},$ $\ k_{\mathbf{R}}\left( \overline{r%
}_{\mathbf{R}}\right) :=\sqrt{G_{\mathbf{R}}},$ then $\left( \text{3.8}%
\right) $ becomes%
\begin{equation}
ds_{\mathbf{R}}^{2}=d\overline{r}_{\mathbf{R}}^{2}+k_{\mathbf{R}}^{2}\left( 
\overline{r}_{\mathbf{R}}\right) d\overline{\theta }_{\mathbf{R}}^{2}. 
\tag{3.9}
\end{equation}%
Comparing $\left( \text{3.6}\right) $ with $\left( \text{3.9}\right) $, if
we take $\overline{r}=\overline{r}_{\mathbf{R}},$ \ $\overline{\theta }=%
\overline{\theta }_{\mathbf{R}},$ \ $k\left( \overline{r}\right) =k_{\mathbf{%
R}}\left( \overline{r}_{\mathbf{R}}\right) ,$ then we have an isometry
between $\mathbf{H}_{m}(r,\theta )$ and $\mathbf{R}_{m}(r_{\mathbf{R}%
},\theta _{\mathbf{R}})$. Therefore, it follows that%
\begin{equation}
\dint \sqrt{\frac{Q}{G}}\text{ }dr=\dint \sqrt{\frac{Q_{\mathbf{R}}}{G_{%
\mathbf{R}}}}\text{ }dr_{\mathbf{R}}.  \tag{3.10}
\end{equation}%
Substituting the equation%
\begin{equation*}
dr_{\mathbf{R}}=\frac{2\left( m+1\right) r^{2m+1}+4mr^{2m-1}\cos \left(
2m\theta \right) }{2\left( m+1\right) r_{\mathbf{R}}^{2m+1}+4mr_{\mathbf{R}%
}^{2m-1}\cos \left( 2m\theta _{\mathbf{R}}\right) }\text{ }dr
\end{equation*}%
into the $\left( \text{3.10}\right) $, we get the function $\varphi _{%
\mathbf{R}}.$
\end{proof}

\section{Helicoidal surface of value $3$}

We give the helicoidal surface of value $3$ using Bour's theorem in this
section.

\begin{proposition}
A helicoidal surface of value $3$ (see Figure 1 a-b)%
\begin{equation}
\mathbf{H}_{3}\left( r,\theta \right) =\left( 
\begin{array}{c}
\frac{r^{2}}{2}\cos \left( 2\theta \right) -\frac{r^{4}}{4}\cos \left(
4\theta \right) \\ 
-\frac{r^{2}}{2}\sin \left( 2\theta \right) -\frac{r^{4}}{4}\sin \left(
4\theta \right) \\ 
\varphi \left( r\right) +a\theta%
\end{array}%
\right) ,  \tag{3.11}
\end{equation}%
is isometric to the rotational surface of value $3$%
\begin{equation}
\mathbf{R}_{3}(r_{\mathbf{R}},\theta _{\mathbf{R}})=\left( 
\begin{array}{c}
\frac{G}{2}\cos \left[ 2\left( \theta +\int \frac{F}{G}dr\right) \right] -%
\frac{G^{2}}{4}\cos \left[ 4\left( \theta +\int \frac{F}{G}dr\right) \right]
\\ 
-\frac{G}{2}\sin \left[ 2\left( \theta +\int \frac{F}{G}dr\right) \right] -%
\frac{G^{2}}{4}\sin \left[ 4\left( \theta +\int \frac{F}{G}dr\right) \right]
\\ 
\varphi _{\mathbf{R}}\left( r_{\mathbf{R}}\right)%
\end{array}%
\right) ,  \tag{3.12}
\end{equation}%
where%
\begin{eqnarray*}
\varphi _{\mathbf{R}}^{\prime 2} &=&\frac{\left\{ 8G^{7/2}+12G^{5/2}\cos %
\left[ 6\left( \theta +\int \frac{F}{G}dr\right) \right] \right\} ^{2}\det I%
}{\left[ 8r^{7}+12r^{5}\cos \left( 6\theta \right) \right] ^{2}G} \\
&&+\frac{2G^{3}\sin ^{2}\left[ 6\left( \theta +\int \frac{F}{G}dr\right) %
\right] }{G^{2}+2G\cos \left[ 6\left( \theta +\int \frac{F}{G}dr\right) %
\right] +1} \\
&&-G\left\{ G^{2}-2G\cos \left[ 6\left( \theta +\int \frac{F}{G}dr\right) %
\right] +1\right\} \text{,}
\end{eqnarray*}%
\begin{eqnarray*}
E &=&r^{2}[r^{4}-2r^{2}\cos \left( 6\theta \right) +1]+\varphi ^{\prime 2},
\\
F &=&2r^{5}\sin \left( 6\theta \right) +a\varphi ^{\prime }, \\
G &=&r^{4}[r^{4}+2r^{2}\cos \left( 6\theta \right) +1]+a^{2},
\end{eqnarray*}%
$\det I=EG-F^{2},$ $r,a\in \mathbb{R}^{+},$ $0\leq \theta \leq 2\pi .$
\end{proposition}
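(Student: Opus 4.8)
The plan is to treat the Proposition as the special case $m=3$ of the Theorem of Section 3 and to carry the substitution $m=3$ through each of its formulas, recording the simplifications that occur. First I would put $m=3$ into the parametrization (3.2): since $m-1=2$ and $m+1=4$, the radial coefficients $\frac{r^{m-1}}{m-1}$ and $\frac{r^{m+1}}{m+1}$ become $\frac{r^{2}}{2}$ and $\frac{r^{4}}{4}$, while the angular arguments $(m-1)\theta$ and $(m+1)\theta$ become $2\theta$ and $4\theta$; this is exactly (3.11).

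Next I would specialize the first fundamental form. With $m=3$ one has $2m-4=2$, $2m-2=4$, $2m-1=5$, $2m+1=7$ and $2m=6$, so the coefficients $E,F,G$ of the Theorem (equivalently, the line element (3.4)) reduce to $E=r^{2}[r^{4}-2r^{2}\cos(6\theta)+1]+\varphi^{\prime 2}$, $F=2r^{5}\sin(6\theta)+a\varphi^{\prime}$ and $G=r^{4}[r^{4}+2r^{2}\cos(6\theta)+1]+a^{2}$, with $\det I=EG-F^{2}$, as asserted.

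Then I would run the Bour construction verbatim from the proof of the Theorem: set $r_{\mathbf R}=\sqrt{G}$ and $\theta_{\mathbf R}=\theta+\int\frac{F}{G}\,dr$, so that the reduced line elements (3.6) and (3.9) agree and $\mathbf H_{3}$ is isometric to $\mathbf R_{3}$. Substituting $r_{\mathbf R}=\sqrt{G}$ into the rotational parametrization (3.7) turns $\frac{r_{\mathbf R}^{2}}{2}$ into $\frac{G}{2}$, turns $\frac{r_{\mathbf R}^{4}}{4}$ into $\frac{G^{2}}{4}$, and turns $\theta_{\mathbf R}$ into $\theta+\int\frac{F}{G}\,dr$, which yields (3.12). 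For the generating function I would start from the Theorem's formula for $\varphi_{\mathbf R}^{\prime 2}$, insert the $m=3$ instance $dr_{\mathbf R}=\dfrac{8r^{7}+12r^{5}\cos(6\theta)}{8r_{\mathbf R}^{7}+12r_{\mathbf R}^{5}\cos(6\theta_{\mathbf R})}\,dr$ of the substitution coming from (3.10), and then rewrite every power of $r_{\mathbf R}=\sqrt{G}$ as a power of $G$, namely $r_{\mathbf R}^{7}=G^{7/2}$, $r_{\mathbf R}^{5}=G^{5/2}$, $r_{\mathbf R}^{6}=G^{3}$ and $r_{\mathbf R}^{2}=G$, with $\cos(6\theta_{\mathbf R})=\cos[6(\theta+\int\frac{F}{G}\,dr)]$; collecting these produces precisely the displayed expression for $\varphi_{\mathbf R}^{\prime 2}$.

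I expect the only real work to be bookkeeping rather than any new idea: one must keep the half-integer powers $G^{7/2}$ and $G^{5/2}$ straight, propagate the term $\int\frac{F}{G}\,dr$ through each trigonometric factor, and note that the construction is legitimate only where $G$ and $8r^{7}+12r^{5}\cos(6\theta)=4r^{5}(2r^{2}+3\cos(6\theta))$ do not vanish, which is the tacit restriction underlying the stated domain $r,a\in\mathbb{R}^{+}$, $0\le\theta\le2\pi$.
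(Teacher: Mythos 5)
Your proposal is correct and follows exactly the paper's route: the paper's proof is the one-line remark that the Proposition is the case $m=3$ of the preceding Theorem, and your substitutions ($2m-4=2$, $2m-1=5$, $2m-2=4$, $2m=6$, $2(m+1)=8$, $4m=12$, and $r_{\mathbf R}=\sqrt{G}$ giving $G^{7/2}$, $G^{5/2}$, $G^{3}$, $G$) are precisely the bookkeeping the author leaves implicit. Your added remark about the nonvanishing of $G$ and of $8r^{7}+12r^{5}\cos(6\theta)$ is a sensible, if tacit, clarification and does not change the argument.
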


\begin{proof}
Taking $m=3$ in the previous theorem, we easily get the results.
\end{proof}

\begin{corollary}
When $a=0$ and $\varphi \left( r\right) =\frac{2}{3}r^{3}\cos \left( 3\theta
\right) $ in $\mathbf{H}_{3}\left( r,\theta \right) ,$ we obtain the Bour's
minimal surface $\mathfrak{B}_{3}\left( r,\theta \right) $ (see Figure 2
a-b, and \cite{Gu3} for details).
\end{corollary}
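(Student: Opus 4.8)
The plan is to substitute the prescribed data directly into $(3.11)$ and to recognize the resulting patch as a classical Weierstrass (null-curve) representation. Setting $a=0$ and taking the third coordinate to be $\frac{2}{3}r^{3}\cos(3\theta)$ reduces $\mathbf{H}_{3}$ to
\begin{equation*}
\mathbf{H}_{3}(r,\theta)=\left(
\begin{array}{c}
\frac{r^{2}}{2}\cos(2\theta)-\frac{r^{4}}{4}\cos(4\theta)\\
-\frac{r^{2}}{2}\sin(2\theta)-\frac{r^{4}}{4}\sin(4\theta)\\
\frac{2}{3}r^{3}\cos(3\theta)
\end{array}
\right).
\end{equation*}
Introducing the complex coordinate $w=re^{-i\theta}$, so that $\operatorname{Re}(w^{k})=r^{k}\cos(k\theta)$ and $\operatorname{Im}(w^{k})=-r^{k}\sin(k\theta)$, a term-by-term comparison shows that the three coordinate functions are the real parts of the holomorphic functions
\[
F_{1}(w)=\tfrac12 w^{2}-\tfrac14 w^{4},\qquad F_{2}(w)=-\tfrac{i}{2}w^{2}-\tfrac{i}{4}w^{4},\qquad F_{3}(w)=\tfrac23 w^{3},
\]
that is, $\mathbf{H}_{3}=\operatorname{Re}(F_{1},F_{2},F_{3})$.

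The next step is to verify that $(F_{1},F_{2},F_{3})$ is a null curve: differentiating gives $\phi=(F_{1}',F_{2}',F_{3}')=(w-w^{3},\,-i(w+w^{3}),\,2w^{2})$, and
\[
\phi_{1}^{2}+\phi_{2}^{2}+\phi_{3}^{2}=(w-w^{3})^{2}-(w+w^{3})^{2}+4w^{4}=0 .
\]
Hence the image of $\mathbf{H}_{3}$ is the real part of a holomorphic null curve, so it is a minimal surface, $H\equiv 0$; reading off the Weierstrass data from $\phi$ — up to the usual normalization, $f=-2w^{3}$ and $g=-1/w$ — identifies it as Bour's minimal surface of value $3$. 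A more pedestrian alternative, avoiding complex coordinates altogether, is to compute $E,F,G$ and the second-fundamental-form coefficients $e,f,g$ directly from the displayed parametrization and to check $eG-2fF+gE=0$; this route has the side benefit of confirming that the general formulas $(3.11)$--$(3.12)$ specialize correctly at $a=0$. Either way, comparing with the explicit parametrization of $\mathfrak{B}_{3}(r,\theta)$ recorded in \cite{Gu3} completes the proof.

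All of the computations above are routine, so there is no real obstacle; the two points that deserve attention are the sign bookkeeping when translating between $(r,\theta)$ and $w=re^{-i\theta}$, and the fact that $\varphi$ — declared in $(3.1)$ as a function of $r$ alone — is here allowed to carry the factor $\cos(3\theta)$, so the statement must be read as redefining the third component of $\mathbf{H}_{3}$ accordingly. Reconciling the orientation and scaling conventions used for $\mathfrak{B}_{3}$ in \cite{Gu3} is then the final, purely bookkeeping, step.
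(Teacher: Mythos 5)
Your argument is correct, and it is genuinely different from what the paper does: the paper offers no computation at all for this corollary, treating it as an immediate substitution of $a=0$ and $\varphi=\frac{2}{3}r^{3}\cos(3\theta)$ into (3.11) followed by visual/formal comparison with the parametrization of $\mathfrak{B}_{3}$ in \cite{Gu3} (hence the pointer to Figure 2 and that reference), whereas you actually prove the two claims implicit in the statement. Your identification of the coordinates as $\operatorname{Re}\bigl(\tfrac12 w^{2}-\tfrac14 w^{4},\,-\tfrac{i}{2}w^{2}-\tfrac{i}{4}w^{4},\,\tfrac23 w^{3}\bigr)$ with $w=re^{-i\theta}$ checks out, the null condition $(w-w^{3})^{2}-(w+w^{3})^{2}+4w^{4}=0$ is verified correctly, and the recovered Weierstrass data $\bigl(f,g\bigr)=\bigl(-2w^{3},-1/w\bigr)$ is equivalent (via $g\mapsto 1/g$, i.e.\ up to a reflection/rigid motion) to the standard Bour data $\bigl(2w,\,w\bigr)$ for $m=3$, so the identification with $\mathfrak{B}_{3}$ is legitimate rather than merely asserted. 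What your route buys is an actual proof of minimality independent of \cite{Gu3} and of the curvature formulas of the subsequent proposition (which could not be applied blindly anyway, since they were derived under the assumption $\varphi=\varphi(r)$); what the paper's route buys is brevity, at the cost of leaving both minimality and the identification to an external preprint. You were also right to flag, and to repair by fiat, the internal inconsistency that $\varphi$ is declared to depend on $r$ alone while the corollary substitutes a $\theta$-dependent expression: the only coherent reading is that the third component of $\mathbf{H}_{3}$ is being redefined as $\frac{2}{3}r^{3}\cos(3\theta)$, which is exactly the reading your computation uses. The only point to keep in mind is that minimality is a property of the image, so one should note (as your setup implicitly does) that $(r,\theta)\mapsto w=re^{-i\theta}$ is a legitimate reparametrization for $r>0$ away from the branch point $w=0$, which is excluded by $r\in\mathbb{R}^{+}$.
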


\begin{proposition}
The mean curvature and the Gaussian curvature of the helicoidal surface of
value $3$ are as follow%
\begin{eqnarray*}
H &=&\frac{1}{4\left( \det I\right) ^{3/2}}\{2r^{3}(r^{4}-1)(r^{8}+2r^{6}%
\cos (6\theta )+r^{4}+a^{2})\varphi ^{\prime \prime } \\
&&+4r^{4}(2r^{4}+r^{2}\cos 6\theta -1)\varphi ^{\prime 3}-12ar^{5}\sin
(6\theta )\varphi ^{\prime 2} \\
&&+r^{2}[2(r^{2}+1)(r^{8}+r^{4}+a^{2})\cos (2\theta
)+(2r^{2}-1)(r^{8}+r^{4}+a^{2})\cos (4\theta ) \\
&&-2r^{2}(10r^{8}+6r^{6}\cos (6\theta )-6r^{4}+5a^{2})\allowbreak \cos
(6\theta ) \\
&&+3r^{4}(r^{8}+r^{4}+a^{2})\cos (8\theta )-16r^{8}\sin ^{2}6\theta
\allowbreak \\
&&+\allowbreak 4r^{6}(r^{2}+1)\cos (2\theta )\cos (6\theta
)+2r^{6}(2r^{2}-1)\cos (4\theta )\cos (6\theta ) \\
&&+6r^{10}\cos 6\theta \cos 8\theta +\allowbreak r^{4}\left(
13a^{2}+2r^{4}+5r^{8}-3\right) -7a^{2}]\varphi ^{\prime }\allowbreak \\
&&+\allowbreak 2(2ar^{3}+1)(r^{8}+r^{4}+a^{2})\sin (2\theta )+\allowbreak
r(a+2r)(r^{8}+r^{4}+a^{2})\sin (4\theta ) \\
&&+\allowbreak 2ar^{3}(15r^{8}-9r^{4}+a^{2})\allowbreak \sin (6\theta
)-3ar^{5}(r^{8}+r^{4}+a^{2})\sin (8\theta ) \\
&&+4r^{6}(2ar^{3}+1)\sin (2\theta )\cos (6\theta )+2r^{7}(a+2r)\sin (4\theta
)\cos (6\theta ) \\
&&-2ar^{9}(2\sin (6\theta )+3r^{2}\sin (8\theta ))\allowbreak \cos (6\theta
)\}
\end{eqnarray*}%
and%
\begin{eqnarray*}
K &=&\frac{1}{\left( \det I\right) ^{2}}\{2ar^{5}(r^{4}-1)\allowbreak \left(
r^{2}\cos (6\theta )-2r^{4}+1\right) \varphi ^{\prime \prime
}+2r^{8}(r^{4}-1)\sin (6\theta )\varphi ^{\prime }\varphi ^{\prime \prime }
\\
&&+\frac{1}{2}r^{7}[-32r^{9}+28r^{5}-8r+(-3r^{4}+2r^{2}-1)\sin (2\theta
)+2(r^{2}+1)\sin (4\theta ) \\
&&+2(-3r^{4}+1)\sin (6\theta )+2(r^{2}+1)\sin (8\theta )\allowbreak
+(2r^{2}-1)\sin (10\theta )\allowbreak -2r^{2}\sin (12\theta ) \\
&&+3r^{4}\sin (14\theta )+16r^{3}(-2r^{4}+1)\cos (6\theta )-4r^{5}\cos
(12\theta )]\varphi ^{\prime 2} \\
&&+r^{4}[8ar^{5}(2r^{4}-1)\sin (6\theta )+4ar^{7}\sin (12\theta
)+(-4ar^{6}-3ar^{4}+r^{3}+2ar^{2}+2a)\cos (2\theta ) \\
&&+(-4ar^{6}+5ar^{4}+3ar^{2}+r-a)\cos (4\theta )+a(r^{4}-1)r^{2}\cos
(6\theta ) \\
&&+(-6ar^{8}+2ar^{4}+ar^{2}-r)\cos (8\theta )+(ar^{4}-r^{3}-ar^{2})\cos
(10\theta ) \\
&&-2ar^{4}\cos (12\theta )+3ar^{6}\cos (14\theta )+6ar^{8}-5ar^{4}+a]\varphi
^{\prime }\allowbreak \\
&&-\frac{1}{2}ar^{2}\left( 19ar^{7}-7ar^{3}+10r^{2}-4\right) \allowbreak
\sin (2\theta )+ar^{2}\left( 1-4r^{4}\right) \left( ar+1\right) \allowbreak
\sin (4\theta ) \\
&&-2a^{2}r^{5}\left( 2r^{4}-1\right) \sin (6\theta )+ar^{4}\left(
6ar^{7}-ar^{3}+1\right) \sin (8\theta )\allowbreak +\frac{1}{2}ar^{4}\left(
ar+2\right) \sin (10\theta ) \\
&&+\allowbreak a^{2}r^{7}\sin (12\theta )-\frac{3}{2}a^{2}r^{9}\sin
(14\theta )\allowbreak +2a^{2}r^{10}\cos (12\theta )-2a^{2}r^{10}\}.
\end{eqnarray*}%
respectively, where $\det I=EG-F^{2},$ $\varphi ^{\prime }=\frac{d\varphi }{%
dr},$ $r,a\in \mathbb{R}^{+},$ $0\leq \theta \leq 2\pi $.
\end{proposition}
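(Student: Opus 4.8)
The plan is to compute the first and second fundamental forms of $\mathbf{H}_{3}(r,\theta)$ directly from the parametrization (3.11), and then substitute into the standard formulas $H=\tfrac{1}{2}\,\dfrac{EN-2FM+GL}{EG-F^{2}}$ and $K=\dfrac{LN-M^{2}}{EG-F^{2}}$. First I would record the partial derivatives $\mathbf{H}_{3,r}$ and $\mathbf{H}_{3,\theta}$; the key observation is that the first two coordinates of $\mathbf{H}_{3}$ are built from $\tfrac{r^{2}}{2}(\cos 2\theta,-\sin 2\theta)$ and $-\tfrac{r^{4}}{4}(\cos 4\theta,\sin 4\theta)$, so their $r$- and $\theta$-derivatives are again of the form $r^{k}(\cos k\theta,\pm\sin k\theta)$, and the various dot products collapse via $\cos\alpha\cos\beta\pm\sin\alpha\sin\beta=\cos(\alpha\mp\beta)$ into expressions involving only $\cos(2m\theta)=\cos(6\theta)$. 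This is exactly how the coefficients $E,F,G$ in the proposition (equivalently the line element (3.4) with $m=3$) arise, so the computation of the first fundamental form is already essentially done in the proof of the Theorem.

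Next I would compute the unit normal $\mathbf{n}=\dfrac{\mathbf{H}_{3,r}\times\mathbf{H}_{3,\theta}}{|\mathbf{H}_{3,r}\times\mathbf{H}_{3,\theta}|}$, noting $|\mathbf{H}_{3,r}\times\mathbf{H}_{3,\theta}|=\sqrt{EG-F^{2}}=\sqrt{\det I}$, which explains the powers of $\det I$ in the denominators of $H$ and $K$ (namely $(\det I)^{3/2}$ for $H$ and $(\det I)^{2}$ for $K$, after clearing the $\sqrt{\det I}$ in the numerator of $L,M,N$). Then I would compute the second derivatives $\mathbf{H}_{3,rr}$, $\mathbf{H}_{3,r\theta}$, $\mathbf{H}_{3,\theta\theta}$ — each again a combination of $r^{k}(\cos k\theta,\pm\sin k\theta)$ vectors plus the $(0,0,\ast)$ terms carrying $\varphi',\varphi''$ and $a$ — and dot them with the cross product $\mathbf{H}_{3,r}\times\mathbf{H}_{3,\theta}$ to get $L\sqrt{\det I}$, $M\sqrt{\det I}$, $N\sqrt{\det I}$. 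Assembling $EN-2FM+GL$ and $LN-M^{2}$ and simplifying the trigonometric products (repeatedly using product-to-sum identities to convert $\cos k\theta\cos\ell\theta$, $\sin k\theta\sin\ell\theta$, $\sin k\theta\cos\ell\theta$ into single harmonics $\cos j\theta$, $\sin j\theta$) yields the stated expressions, organized by the power of $\varphi'$ that multiplies each bracket.

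The main obstacle is purely the bookkeeping: the numerators of $H$ and $K$ are large polynomials in $r$, $a$, $\varphi'$, $\varphi''$ with many trigonometric terms $\cos(2k\theta)$, $\sin(2k\theta)$ for $k$ up to $7$, and one must carefully track cross terms such as $\cos(2\theta)\cos(6\theta)$ that only partially reduce. There is no conceptual difficulty — the surface is a genuine immersion on $r\in\mathbb{R}^{+}$ since $\det I>0$ there, so all the formulas are well defined — but the algebra is long enough that I would carry it out with a computer algebra system (as is standard for such helicoidal-surface computations, cf. \cite{Hi}, \cite{Gu2}), expand everything in the basis $\{r^{p}a^{q}(\varphi')^{s}(\varphi'')^{t}\cos(2k\theta),\ \ldots\sin(2k\theta)\}$, and collect coefficients. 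Once expanded and collected, matching against the claimed formulas for $H$ and $K$ is a finite check, which completes the proof.
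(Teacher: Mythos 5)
Your proposal follows essentially the same route as the paper's own proof: compute $(\mathbf{H}_3)_r$, $(\mathbf{H}_3)_\theta$, the coefficients $E,F,G$ and $\det I$, the unit normal $e=(\mathbf{H}_{3,r}\times\mathbf{H}_{3,\theta})/\sqrt{\det I}$, the second derivatives and hence $L,M,N$, and then substitute into the standard formulas $H=\frac{EN-2FM+GL}{2\det I}$ and $K=\frac{LN-M^{2}}{\det I}$, with the remaining work being trigonometric bookkeeping. This matches the paper's argument (which likewise leaves the final expansion as a routine verification), so your proof is correct and not genuinely different.
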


\begin{proof}
Taking the differential with respect to $r,$ $\theta $ to the $\mathbf{H}%
_{3},$ we have%
\begin{equation*}
\left( \mathbf{H}_{3}\right) _{r}=\left( 
\begin{array}{c}
r\cos \left( 2\theta \right) -r^{3}\cos \left( 4\theta \right) \\ 
-r\sin \left( 2\theta \right) -r^{3}\sin \left( 4\theta \right) \\ 
\varphi ^{\prime }%
\end{array}%
\right) .
\end{equation*}%
and%
\begin{equation*}
\left( \mathbf{H}_{3}\right) _{\theta }=\left( 
\begin{array}{c}
-r^{2}\sin (2\theta )+r^{4}\sin (4\theta ) \\ 
-r^{2}\cos (2\theta )-r^{4}\cos (4\theta ) \\ 
a%
\end{array}%
\right) ,
\end{equation*}%
The coefficients of the first fundamental form of the surface are%
\begin{equation*}
E=r^{2}(r^{4}-2r^{2}\cos (6\theta )+1)+\varphi ^{\prime 2},
\end{equation*}%
\begin{equation*}
F=2r^{5}\sin \left( 6\theta \right) +a\varphi ^{\prime },
\end{equation*}%
\begin{equation*}
G=r^{4}\left( r^{4}+2r^{2}\cos \left( 6\theta \right) +1\right) +a^{2}.
\end{equation*}%
Then we get%
\begin{eqnarray*}
\det I &=&r^{2}[r^{12}-2r^{8}+a^{2}r^{4}+r^{4}-2a^{2}r^{2}\cos \left(
6\theta \right) +a^{2}] \\
&&-4ar^{5}\sin \left( 6\theta \right) \varphi ^{\prime }+r^{4}\left[
r^{4}+2r^{2}\cos \left( 6\theta \right) +1\right] \varphi ^{\prime 2}.
\end{eqnarray*}%
Using the second differentials 
\begin{equation*}
\left( \mathbf{H}_{3}\right) _{rr}=\left( 
\begin{array}{c}
\cos \left( 2\theta \right) -3r^{2}\cos \left( 4\theta \right) \\ 
-\sin \left( 2\theta \right) -3r^{2}\sin \left( 4\theta \right) \\ 
\varphi ^{\prime \prime }%
\end{array}%
\right) ,
\end{equation*}%
\begin{equation*}
\left( \mathbf{H}_{3}\right) _{r\theta }=\left( 
\begin{array}{c}
-2r\sin (2\theta )+4r^{3}\sin (4\theta ) \\ 
-2r\cos (2\theta )-4r^{3}\cos (4\theta ) \\ 
0%
\end{array}%
\right) ,
\end{equation*}%
\begin{equation*}
\left( \mathbf{H}_{3}\right) _{\theta \theta }=\left( 
\begin{array}{c}
-2r^{2}\cos (2\theta )+4r^{4}\cos (4\theta ) \\ 
2r^{2}\sin (2\theta )+4r^{4}\sin (4\theta ) \\ 
0%
\end{array}%
\right) ,
\end{equation*}%
and the Gauss map (the unit normal) 
\begin{equation*}
e=\frac{1}{\sqrt{\det I}}\left( 
\begin{array}{c}
-ar(\allowbreak r^{2}\sin \left( 4\theta \right) +\sin \left( 2\theta
\right) )+r^{2}\varphi ^{\prime }(r^{2}\cos \left( 4\theta \right) +\cos
\left( 2\theta \right) ) \\ 
ar(r^{2}\cos \left( 4\theta \right) -\cos \left( 2\theta \right)
)+r^{2}\varphi ^{\prime }(r^{2}\sin \left( 4\theta \right) -\sin \left(
2\theta \right) ) \\ 
r^{7}-r^{3}%
\end{array}%
\right)
\end{equation*}%
of the surface $\mathbf{H}_{3}$, we have the coefficients of the second
fundamental form of the surface as follow%
\begin{eqnarray*}
L &=&\frac{1}{\sqrt{\det I}}(r^{3}(r^{4}-1)\varphi ^{\prime \prime }+\frac{1%
}{2}r^{2}(1-3r^{4}+2(1+r^{2})\cos 2\theta \\
&&+(-1+2r^{2})\cos 4\theta -2r^{2}\cos 6\theta +3r^{4}\cos 8\theta )\varphi
^{\prime } \\
&&+(1+2ar^{3})\sin 2\theta +\frac{1}{2}r(a+2r)\sin 4\theta \\
&&+ar^{3}\sin 6\theta -\frac{3}{2}ar^{5}\sin 8\theta ),
\end{eqnarray*}%
\begin{equation*}
M=\frac{1}{\sqrt{\det I}}2r^{2}((-2r^{4}+r^{2}\cos 6\theta +1)a+r^{3}\varphi
^{\prime }\sin 6\theta \allowbreak ),
\end{equation*}%
and%
\begin{equation*}
N=\frac{1}{\sqrt{\det I}}2r^{4}(-ar\sin 6\theta +(2r^{4}+r^{2}\cos 6\theta
-1)\varphi ^{\prime }).
\end{equation*}%
\ Therefore, we can see the results easily.
\end{proof}

\begin{corollary}
If the helicoidal surface of value $3$ is minimal then we get%
\begin{eqnarray*}
0 &=&2r^{3}(r^{4}-1)(r^{8}+2r^{6}\cos (6\theta )+r^{4}+a^{2})\varphi
^{\prime \prime } \\
&&+4r^{4}(2r^{4}+r^{2}\cos 6\theta -1)\varphi ^{\prime 3}-12ar^{5}\sin
(6\theta )\varphi ^{\prime 2} \\
&&+r^{2}[2(r^{2}+1)(r^{8}+r^{4}+a^{2})\cos (2\theta
)+(2r^{2}-1)(r^{8}+r^{4}+a^{2})\cos (4\theta ) \\
&&-2r^{2}(10r^{8}+6r^{6}\cos (6\theta )-6r^{4}+5a^{2})\allowbreak \cos
(6\theta ) \\
&&+3r^{4}(r^{8}+r^{4}+a^{2})\cos (8\theta )-16r^{8}\sin ^{2}6\theta
\allowbreak \\
&&+\allowbreak 4r^{6}(r^{2}+1)\cos (2\theta )\cos (6\theta
)+2r^{6}(2r^{2}-1)\cos (4\theta )\cos (6\theta ) \\
&&+6r^{10}\cos 6\theta \cos 8\theta +\allowbreak r^{4}\left(
13a^{2}+2r^{4}+5r^{8}-3\right) -7a^{2}]\varphi ^{\prime }\allowbreak \\
&&+\allowbreak 2(2ar^{3}+1)(r^{8}+r^{4}+a^{2})\sin (2\theta )+\allowbreak
r(a+2r)(r^{8}+r^{4}+a^{2})\sin (4\theta ) \\
&&+\allowbreak 2ar^{3}(15r^{8}-9r^{4}+a^{2})\allowbreak \sin (6\theta
)-3ar^{5}(r^{8}+r^{4}+a^{2})\sin (8\theta ) \\
&&+4r^{6}(2ar^{3}+1)\sin (2\theta )\cos (6\theta )+2r^{7}(a+2r)\sin (4\theta
)\cos (6\theta ) \\
&&-2ar^{9}(2\sin (6\theta )+3r^{2}\sin (8\theta ))\allowbreak \cos (6\theta )
\end{eqnarray*}%
The solution of the second order ODE as above is an atractive
problem.\bigskip
\end{corollary}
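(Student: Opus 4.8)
The plan is to read the statement off directly from the mean-curvature formula for $\mathbf{H}_{3}(r,\theta)$ established in the preceding proposition, together with the classical fact that a regular surface in $\mathbb{E}^{3}$ is minimal precisely when its mean curvature vanishes identically. That proposition gives $H=\frac{1}{4(\det I)^{3/2}}\left\{\Phi\right\}$, where $\left\{\Phi\right\}$ abbreviates the long braced expression in $\varphi$, $\varphi^{\prime}$, $\varphi^{\prime\prime}$ and trigonometric functions of $\theta$; minimality of $\mathbf{H}_{3}$ means $H\equiv 0$ on the parameter domain $r,a\in\mathbb{R}^{+}$, $0\le\theta\le 2\pi$.

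First I would observe that on this domain $\mathbf{H}_{3}$ is a regular surface, so $\det I=EG-F^{2}>0$; hence $(\det I)^{3/2}$ is a finite, strictly positive number, and the scalar factor $\tfrac{1}{4}$ is nonzero as well. Consequently $H\equiv 0$ is equivalent to $\Phi\equiv 0$ identically in $r$ and $\theta$. Transcribing $\Phi=0$ then reproduces verbatim the displayed second-order ODE for $\varphi$ (with $\theta$ entering only as a parameter), which is exactly the content of the corollary. No further computation is needed, since all of the algebraic simplification has already been carried out in the proof of the mean-curvature formula.

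I do not expect any real obstacle here: the single point that deserves a word of care is the tacit regularity hypothesis $\det I\neq 0$, which is built into the phrase ``the helicoidal surface of value $3$''. The closing sentence --- that integrating the resulting ODE is an attractive problem --- is a remark on the difficulty of producing $\varphi$ in closed form and is not part of the assertion to be proved; as a sanity check one may note that the equation should collapse to a considerably simpler one in the special case $a=0$, consistent with the explicit profile function recorded in the earlier corollary on Bour's minimal surface.
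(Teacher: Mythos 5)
Your proof is correct and matches the paper's (implicit) argument: the corollary follows immediately from the preceding proposition by setting $H\equiv 0$ and clearing the nonzero factor $\tfrac{1}{4}(\det I)^{-3/2}$, which is exactly what you do. Your added remark on regularity ($\det I>0$) is a reasonable point of care but introduces nothing beyond the paper's intended one-line deduction.
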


\begin{equation*}
\FRAME{itbpF}{2.2355in}{2.4353in}{0in}{}{}{Figure}{\special{language
"Scientific Word";type "GRAPHIC";maintain-aspect-ratio TRUE;display
"USEDEF";valid_file "T";width 2.2355in;height 2.4353in;depth
0in;original-width 3.6979in;original-height 4.0309in;cropleft "0";croptop
"1";cropright "1";cropbottom "0";tempfilename
'N105O502.wmf';tempfile-properties "XPR";}}\FRAME{itbpF}{2.1837in}{2.4483in}{%
0in}{}{}{Figure}{\special{language "Scientific Word";type
"GRAPHIC";maintain-aspect-ratio TRUE;display "USEDEF";valid_file "T";width
2.1837in;height 2.4483in;depth 0in;original-width 3.9375in;original-height
4.4166in;cropleft "0";croptop "1";cropright "1";cropbottom "0";tempfilename
'N0ZF9W01.wmf';tempfile-properties "XPR";}}
\end{equation*}

\begin{center}
(a) \ \ \ \ \ \ \ \ \ \ \ \ \ \ \ \ \ \ \ \ \ \ \ \ \ \ \ \ \ \ \ \ \ \ \ \
\ \ \ \ \ \ \ \ \ \ \ \ \ \ \ \ (b)\bigskip

Figure 1. $\ $Helicoidal surface of value $3,$ $\varphi \left( r\right) =%
\frac{2}{3}r^{3}\cos \left( 3\theta \right) $

\bigskip

\begin{equation*}
\FRAME{itbpF}{2.2352in}{2.339in}{0in}{}{}{Figure}{\special{language
"Scientific Word";type "GRAPHIC";maintain-aspect-ratio TRUE;display
"USEDEF";valid_file "T";width 2.2352in;height 2.339in;depth
0in;original-width 3.8992in;original-height 4.081in;cropleft "0";croptop
"1";cropright "1";cropbottom "0";tempfilename
'N13JL000.wmf';tempfile-properties "XPR";}}\FRAME{itbpF}{2.4019in}{2.4339in}{%
0in}{}{}{Figure}{\special{language "Scientific Word";type
"GRAPHIC";maintain-aspect-ratio TRUE;display "USEDEF";valid_file "T";width
2.4019in;height 2.4339in;depth 0in;original-width 4.0278in;original-height
4.081in;cropleft "0";croptop "1";cropright "1";cropbottom "0";tempfilename
'N13JOH01.wmf';tempfile-properties "XPR";}}
\end{equation*}%
(a) \ \ \ \ \ \ \ \ \ \ \ \ \ \ \ \ \ \ \ \ \ \ \ \ \ \ \ \ \ \ \ \ \ \ \ \
\ \ \ \ \ \ \ \ \ \ \ \ \ \ \ \ (b)\bigskip

Figure 2. $\ $Bour's minimal surface of value $3,$ $a=0,$ $\varphi \left(
r\right) =\frac{2}{3}r^{3}\cos \left( 3\theta \right) $

\bigskip
\end{center}

The author also focuses on the spacelike and timelike helicoidal surfaces of
value $m$ in the Minkowski 3-space $\mathbb{L}^{3}$ in the next papers$.$

\begin{acknowledgement}
This work had been stated by the author, when he visited as a post-doctoral
researcher at the Katholieke Universiteit Leuven, Belgium in 2011-2012
academic year. The author would like to thank to the hospitality of the
members of the geometry section at K.U. Leuven. Especially to the Professor
Franki Dillen, and Dr. Ana Nistor.
\end{acknowledgement}

Erhan G\"{u}ler\newline
Bart\i n University, Faculty of Science, Department of Mathematics, 74100
Bart\i n, Turkey\newline
ergler@gmail.com\newline
\hfill \endgroup

\end{document}